\newtheorem{theor}{Theorem}
\newtheorem{lemma}[theor]{Lemma}
\newtheorem{prop}[theor]{Proposition}
\theoremstyle{definition}
\newtheorem{defi}[theor]{Definition}
\newtheorem{rem}[theor]{Remark}
\begin{document}

\begin{center}
{\Large 
On the number of non-zero character values in generalized blocks of symmetric groups}

\vspace{12pt}

Lucia Morotti
\end{center}

\begin{abstract}
Given a generalized $e$-block $B$ of a symmetric group and an $e$-regular conjugacy class $C$, we study the number of irreducible characters in $B$ which do not vanish on $C$ and find lower bounds for it.
\end{abstract}

\section{Introduction}

Let $n\geq 1$ and $e\geq 2$. In \cite{kor} K\"{u}lshammer, Olsson and Robinson defined generalized $e$-blocks for symmetric groups, showing that the set of irreducible characters indexed by partitions with the same $e$-core have certain similar properties to the irreducible characters contained in the same $p$-block (for $p$ prime). For a prime $p$ generalized $p$-blocks are equal to $p$-blocks coming from modular representation theory (Nakayama conjecture).

Let $\lambda$ be a partition. We say that $\lambda$ is an $e$-class regular partition if none of its parts is divisible by $e$. In this paper we study lower bounds for the number of irreducible characters contained in the same generalized $e$-block which do not vanish on a certain $e$-regular conjugacy class, that is a conjugacy class indexed by an $e$-class regular partition. In Theorem \ref{t1} we give explicit formulas for such lower bounds, which only depends on the $e$-weight of the considered block.

The work presented here was started in connection to the following question from A. Evseev:%  regarding the number of irreducible characters taking non-zero value on an element of $S_n$ compared to the number of irreducible characters of the centraliser in $S_n$ of the same element.

``Let $\pi\in S_n$. Does it always hold that the number of irreducible characters of $S_n$ not vanishing on $\pi$ is at most equal to the number of irreducible characters of $C_{S_n}(\pi)$?''

The lower bounds presented here were found while studying the corresponding lower bounds for partitions with no part larger than $e-1$ (which are $e$-class regular partitions), in order to possibly answer the above question by induction on the largest part of the cycle partition of $\pi$.

Studying the number of non-zero elements on (parts of) columns of character tables can also be seen to connect to work on non-vanishing conjugacy classes, that is conjugacy classes on which no irreducible character vanishes (see \cite{nvan} and \cite{inw}). We say that a partition is a non-vanishing partition if it labels a non-vanishing conjugacy class of a symmetric group. It can be easily seen that any non-vanishing partition is of the form $(3^a,2^b,1^c)$ for some $a,b,c\geq 0$, with $b$ even if $n=3a+2b+c\not=2$. One can also bound $b$ and $c$ using the distribution of 2- and 3-cores. However not much more is known about such partitions, even if non-trivial examples, such as $(2^2,1^3)$, $(3,1^4)$ and $(3,2^2)$, exist.

Before stating the main theorem of this paper we need some definitions. In order to simplify notations, we will identify a generalized block with the set of partitions labeling characters belonging to it.

\begin{defi}
For an $e$-core $\mu\vdash n-we$ with $w\geq 0$, define the generalized $e$-block $B_e(\mu,w)$ to be the  set of the partitions of $n$ with $e$-core $\mu$.
\end{defi}

For the definition and properties of $e$-cores, see \cite{o1}. For partitions $\nu,\lambda$ with $|\nu|=|\lambda|=n$ let $\chi^\nu$ be the irreducible character of $S_n$ labeled by $\nu$ and $\chi^\nu_\lambda$ the value $\chi^\nu$ takes on the conjugacy class labeled by $\lambda$.

\begin{defi}
Let $\lambda\vdash n$ and an $e$-core $\mu\vdash n-we$ with $w\geq 0$. We define $c_\mu(\lambda):=|\{\nu\in B_e(\mu,w):\chi^\nu_\lambda\not=0\}|$.
\end{defi}

\begin{theor}\label{t1}
Let $\mu\vdash n-we$ be an $e$-core with $w\geq 0$. Then
\[\min\{c_\mu(\lambda):\lambda\vdash |\mu|+we\mbox{ is }e\mbox{-class regular and }c_\mu(\lambda)\not=0\}=w+1.\]
\end{theor}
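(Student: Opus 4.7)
The statement is an equality, so I would split it into two inequalities: an upper bound, namely the existence of an $e$-class regular $\lambda^*\vdash|\mu|+we$ with $c_\mu(\lambda^*)=w+1$, and a lower bound, namely $c_\mu(\lambda)\geq w+1$ for every $e$-class regular $\lambda$ with $c_\mu(\lambda)\neq 0$. The two directions have quite different flavours, and I would attack them independently, with the Murnaghan--Nakayama rule and the $e$-abacus as the main tools.

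\textbf{Upper bound (witness $\lambda^*$).} I would look for a $\lambda^*$ of the form $\mu\cup\tau^*$ where $\tau^*\vdash we$ is $e$-class regular, chosen so that its Murnaghan--Nakayama expansion has a very restricted support. The small cases that one can check by hand strongly suggest a specific shape: $\tau^*=(1,1)$ works for $(e,w)=(2,1)$, $\tau^*=(3,1)$ works for $(e,w)=(2,2)$, and $\tau^*=(2,1)$ works for $(e,w)=(3,1)$. This pattern points to $\tau^*$ having a single ``large'' part together with short supporting parts, so that on the abacus the large rim-hook can be realised in exactly $w+1$ distinct ways (indexed, say, by how many of the $e$ runners the bead crosses), and the remaining short parts then force the completion of the partition uniquely. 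I would first guess the universal form of $\tau^*$ in terms of $e$ and $w$ (something like a hook $((e-1)(w-1)+e+1,1^{?})$ with possibly some $(e-1)$'s inserted so that the total is $we$), then compute $\chi^\nu_{\lambda^*}$ by applying Murnaghan--Nakayama to the large part first and verify that each of the $w+1$ bead-choices yields a distinct non-vanishing value, while no other $\nu\in B_e(\mu,w)$ supports a non-vanishing contribution.

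\textbf{Lower bound.} I would prove $c_\mu(\lambda)\geq w+1$ by induction on $w$. The base case $w=0$ is immediate, since $B_e(\mu,0)=\{\mu\}$ and $\chi^\mu_\mu=\pm 1$. For the inductive step, take $\lambda$ $e$-class regular with $\chi^\nu_\lambda\neq 0$ for some $\nu\in B_e(\mu,w)$, and single out a part $\lambda_1$ of $\lambda$. Because $e\nmid\lambda_1$, removing a $\lambda_1$-rim hook on the abacus moves a bead to a different runner, and therefore the resulting partition $\nu\setminus H$ lies in a strictly smaller-weight generalized block $B_e(\mu',w')$ with $w'<w$. Murnaghan--Nakayama together with the induction hypothesis in $B_e(\mu',w')$ then produces at least $w'+1$ non-vanishing partitions there. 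The crucial step is a lifting argument: insert $\lambda_1$-rim hooks in all allowable abacus positions to produce at least $w+1$ distinct $\nu$ in $B_e(\mu,w)$ on which $\chi^\nu_\lambda$ does not vanish, the extra $w-w'$ partitions arising from the freedom in where the returning bead is placed.

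\textbf{Main obstacle.} The hard part is the lifting step. Several $\nu\in B_e(\mu,w)$ can produce the same reduced partition after removing a $\lambda_1$-rim hook, and Murnaghan--Nakayama is a \emph{signed} sum, so the non-vanishing of $\chi^{\nu\setminus H}_{\lambda\setminus\lambda_1}$ alone does not guarantee the non-vanishing of $\chi^\nu_\lambda$. Controlling these sign cancellations requires a careful abacus-level bookkeeping of leg-lengths, and likely a judicious choice of which part $\lambda_1$ to strip (for instance, a part of a prescribed residue modulo $e$, or the largest one, so that the rim-hook removal is canonical enough to avoid collisions). A similar delicacy appears in the upper bound, where one must simultaneously guarantee that exactly $w+1$ contributions arise and that none of them cancel. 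I expect that both directions can be handled by the same kind of abacus-combinatorial analysis, but finding the right universal formula for $\lambda^*$ and the right invariant to track through the induction is where the real work lies.
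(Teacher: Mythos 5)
There is a genuine gap in both halves of your plan, and in the lower bound an outright false step. You assert that because $e\nmid\lambda_1$, removing a $\lambda_1$-rim hook lands in a block of \emph{strictly smaller} weight $w'<w$; moving a bead to a different runner does not imply this, and it is false: for $e=2$, removing a $1$-hook from $(3,1,1)$ (core $(1)$, weight $2$) gives $(2,1,1)$ (core $()$, weight still $2$). So the induction on $w$ does not get off the ground. Worse, the obstacle you yourself flag --- that Murnaghan--Nakayama is a signed sum and several $\nu$ can collapse to the same reduced partition --- is precisely where all the content of the theorem lies, and your proposal offers no mechanism to resolve it. The paper's lower bound avoids induction entirely: starting from a single $\psi$ in the block with $\chi^\psi_\lambda\neq 0$, for each of the $w$ hooks of $\psi$ of length divisible by $e$ one removes it to get $\varphi$ and forms the virtual character $\overline{\chi}^\varphi=\sum_\beta d_{\varphi,\beta}\chi^\beta$ (signed sum over $ke$-hook additions), which vanishes on every $e$-class regular class by Theorem 21.7 of James; this forces a partner $\beta(\varphi)$ with $\chi^{\beta(\varphi)}_\lambda\neq 0$ whose contribution has opposite sign to $\psi$'s, and a separate $\beta$-set sign lemma (the paper's Lemma \ref{l1}) shows the $w$ partners are pairwise distinct. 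That sign lemma is exactly the ``invariant to track'' you say is missing, and without it (or something equivalent) your lifting step cannot be completed.

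The upper bound is also not established: you describe a search strategy rather than a witness. Your proposed shape $\lambda^*=\mu\cup\tau^*$ with $\tau^*\vdash we$ is not the paper's construction and is dangerous as stated --- the paper's own example $e=4$, $\mu=(2,1)$, $\lambda=(2^{2w+1},1)=\mu\cup(2^{2w})$ is of exactly this form and gives $c_\mu(\lambda)=0$. The paper instead takes $\lambda=(we+h_{1,1}^\mu,h_{2,2}^\mu,\ldots,h_{k,k}^\mu)$, the principal (diagonal) hook lengths of $\mu$ with $we$ added to the first; Corollaries 2.4.8 and 2.4.9 of James--Kerber then pin down the non-vanishing $\psi$ as exactly those obtained from $\mu$ by adding $d$ nodes to the first row and $we-d$ to the first column, of which precisely the $w+1$ with $e\mid d$ have $e$-core $\mu$. (Your small examples all concern the empty core, where the paper uses the separate witness $(we-1,1)$.) In short, the two-proposition decomposition matches the paper, but neither direction is carried out, and the key ideas --- the vanishing virtual characters and the $\beta$-set sign lemma for the lower bound, the principal-hook class for the upper bound --- are absent.
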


Notice that $c_\mu(\lambda)$ could be equal to 0. This happens for example when taking $e=3$, $\mu=(6,4,2)$ and $\lambda=(10,2,1,1,1)$ or when taking $e=4$, $\mu=(2,1)$ and $\lambda=(2^{2w+1},1)$.

The proof of Theorem \ref{t1} will be divided into Propositions \ref{t2} and \ref{t3}, in which it will be proved that such a minimum is at most and at least $w+1$ respectively. Also in the proof of Proposition \ref{t2} an $e$-class regular partition $\lambda$ will be constructed for which $c_\mu(\lambda)=w+1$.

\section{Proof of Theorem \ref{t1}}

In the proof of the next theorem we will construct an explicit $e$-class regular partition $\lambda$ for which $c_\mu(\lambda)=w+1$. This will then prove that the minimum in Theorem \ref{t1} is at most $w+1$.

\begin{prop}\label{t2}
For an $e$-core $\mu\vdash n-we$ with $w\geq 0$ there exists an $e$-class regular partition $\lambda\vdash n$ for which $c_\mu(\lambda)=w+1$. 
\end{prop}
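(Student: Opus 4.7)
I would exhibit $\lambda$ explicitly and verify $c_\mu(\lambda)=w+1$ by the Murnaghan--Nakayama (MN) rule. First I set up the $e$-abacus display of $\mu$ (with bead-count a multiple of $e$), so that $B_e(\mu,w)$ is identified with the $e$-quotients of total size $w$ via the standard bijection arising from $w$ single-slot bead descents along the runners. This framework is essential both for constructing $\lambda$ and for organizing the MN expansion.

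My natural first attempt is to take the largest parts of $\lambda$ to be the first-column hook lengths $h_1>h_2>\cdots>h_{\ell(\mu)}$ of $\mu$. Since $\mu$ is an $e$-core, no hook length of $\mu$ is divisible by $e$, so each $h_i$ is coprime to $e$ and the choice keeps $\lambda$ $e$-class regular. Because $\sum h_i=|\mu|+\binom{\ell(\mu)}{2}$ does not generally equal $n$, I pad $\lambda$ with additional parts drawn from $\{1,e-1\}$ so that $|\lambda|=n$, arranging them to retain the partition-shape condition. Applying MN by stripping the parts of $\lambda$ in decreasing order, each removal of an $h_i$ corresponds on the abacus to moving a single bead of $\nu$ into alignment with the $i$-th bead of $\mu$, with sign recorded by the number of bead-jumps; after all the $h_i$-strips the residual partition encodes only the $e$-quotient of $\nu$.

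The remaining small parts of $\lambda$ then constrain the surviving quotient to a specific $(w+1)$-element family: the quotients obtained by distributing the $w$ bead descents between two distinguished runners so as to form a column of size $k$ on one and a row of size $w-k$ on the other, for $k=0,1,\ldots,w$ (this structure is visible in small examples and is the count we need). The main obstacle is the sign-and-multiplicity bookkeeping in the MN expansion: I must verify that the signed sum over rim-hook sequences yields a nonzero $\pm 1$ on each of these $w+1$ target $\nu$'s and cancels to zero on every other $\nu\in B_e(\mu,w)$. This requires careful tracking of leg lengths via bead-jumps and checking that the final small-part strips interact correctly with the $e$-quotient. Boundary cases (such as $\mu=\emptyset$ with $\ell(\mu)=0$, or small $w$ relative to $\binom{\ell(\mu)}{2}$) may require separate treatment and an adjustment of the padding.
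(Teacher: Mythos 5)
There is a genuine gap here, on two counts. First, your $\lambda$ is not actually well-defined: the first-column hook lengths of $\mu$ sum to $|\mu|+\binom{\ell(\mu)}{2}$, which exceeds $n=|\mu|+we$ whenever $\binom{\ell(\mu)}{2}>we$, and in that case no amount of ``padding'' with extra small parts can produce a partition of $n$; you would have to discard or shrink parts, and the construction collapses. The paper sidesteps this by using the \emph{principal} (diagonal) hook lengths $h_{1,1}^\mu,h_{2,2}^\mu,\ldots,h_{k,k}^\mu$ rather than the first-column ones: the principal hooks partition the Young diagram, so these sum to exactly $|\mu|$, and setting $\lambda=(we+h_{1,1}^\mu,h_{2,2}^\mu,\ldots,h_{k,k}^\mu)$ gives a partition of $n$ with no padding at all (and it is $e$-class regular since $\mu$ is an $e$-core). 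Second, and more seriously, the entire content of the proof --- showing that exactly $w+1$ characters of the block survive --- is deferred in your sketch to ``sign-and-multiplicity bookkeeping in the MN expansion,'' which you explicitly flag as the main obstacle and do not carry out. Tracking signed rim-hook sequences for all $\nu\in B_e(\mu,w)$ and proving cancellation on all but $w+1$ of them is a substantial combinatorial task, not a routine verification, so as it stands the argument is a plan rather than a proof.

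The paper avoids the Murnaghan--Nakayama bookkeeping entirely by invoking a known vanishing criterion (Corollary 2.4.9 of James--Kerber): $\chi^\psi_\lambda=0$ unless $\lambda\leq(h_{1,1}^\psi,\ldots,h_{s,s}^\psi)$. Since every $\psi$ in the block contains $[\mu]$, this dominance condition together with $h_{1,1}^\psi-h_{1,1}^\mu=we=|\psi|-|\mu|$ forces $\psi=(\mu_1+d,\mu_2,\ldots,\mu_r,1^{we-d})$, and Corollary 2.4.8 guarantees nonvanishing for all such $\psi$. What remains is the purely elementary check of which $d$ give $e$-core $\mu$ (exactly the multiples of $e$, yielding $w+1$ partitions), plus a separate easy treatment of $\mu=()$ via $\lambda=(we-1,1)$. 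If you want to salvage your approach, you should switch to the principal hook lengths and replace the MN cancellation analysis with this dominance-type vanishing theorem; otherwise the two hard steps you have identified remain unproved.
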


\begin{proof}
%If $\mu=()$ and $m=0$ then $B_q(\mu,m)=\{()\}$. Also $()$ is $q$-regular. Since $\chi^{()}_{()}=1$ the theorem holds in this case.

Assume first that $\mu=()$. Then $w\geq 1$ as $n\geq 1$. Let $\lambda=(we-1,1)$. This is an $e$-class regular partition as $e\geq 2$. We will now show that
\[|B_e((),w)\cap\{\nu:\chi^\nu_\lambda\not=0\}|=w+1\]
which will prove the theorem in this case. Notice that
\[\{\nu:\chi^\nu_\lambda\not=0\}=\{(we),(1^{we})\}\cup\{(a,2,1^{we-a-2}):2\leq a\leq we-2\}\]
(this follows easily by applying the Murnaghan-Nakayama formula). Clearly $(we),(1^{we})\in B_e((),w)$ and they are distinct, since $we\geq 2$. We will now count how many partitions of the form $(a,2,1^{we-a-2})$ are in $B_e((),w)$. Let $2\leq b\leq e+1$ and $0\leq c\leq e-1$ with $a\equiv b \mod e$ and $we-a-2\equiv c\mod e$. Then $(a,2,1^{we-a-2})_{(e)}=(b,2,1^c)_{(e)}$ are equal (where $\delta_{(e)}$ is the $e$-core of a partition $\delta$). For any partition $\phi$ and any node $(i,j)$ of $\phi$ let $h_{i,j}^\phi$ be the corresponding hook length. Then
\begin{align*}
h_{1,1}^{(b,2,1^c)}&=b+c+1\equiv we-1\mbox{ mod }e,\\
h_{1,2}^{(b,2,1^c)}&=b,\\
h_{1,3}^{(b,2,1^c)}&=b-2<e,\\
h_{2,1}^{(b,2,1^c)}&=c+2,\\
h_{2,2}^{(b,2,1^c)}&=1<e,\\
h_{3,1}^{(b,2,1^c)}&=c<e
\end{align*}
(formulas for $h_{1,3}^{(b,2,1^c)}$ and $h_{3,1}^{(b,2,1^c)}$ holding if respectively $(1,3)$ or $(3,1)$ are nodes of $(b,2,1^c)$). Also as $b+c+2\equiv we$, either both or none of $b$ and $c+2$ are divisible by $e$. In particular if $(a,2,1^{we-a-2})\in B_e((),w)$ then $b$, and so also $a$, is divisible by $e$. In this case it follows easily that $(a,2,1^{we-a-2})\in B_e((),w)$. So, as $e\geq 2$,
\begin{align*}
c_{()}(\lambda)&=2+|\{a:2\leq a\leq we-2\mbox{ and }e|a\}|\\
&=2+|\{a:e\leq a\leq we-e\mbox{ and }e|a\}|\\
&=2+|\{e,2e,\ldots,(w-1)e\}|\\
&=w+1
\end{align*}
and then the theorem holds in this case.

Assume now that $\mu\not=()$. Let $k$ be maximal with $(k,k)\in[\mu]$, the Young diagram of $\mu$, and define $\lambda:=(we+h_{1,1}^\mu,h_{2,2}^\mu,\ldots,h_{k,k}^\mu)$. From the definition of $k$ it follows that $\lambda\vdash |\mu|+we=n$. Also as $k\geq 1$ and $h_{1,1}^\mu,\ldots,h_{k,k}^\mu$ are not divisible by $e$ (as $\mu$ is an $e$-core) we have that $\lambda$ is $e$-class regular. We will now show that $c_\mu(\lambda)=w+1$. Let
\[\psi\in B_e(\mu,w)\cap\{\nu:\chi^\nu_\lambda\not=0\}.\]
Then $[\mu]\subseteq [\psi]$. Let $s$ be maximal with $(s,s)\in[\psi]$. Then $s\geq k$ and $h_{i,i}^\psi\geq h_{i,i}^\mu=\lambda_i$ for $2\leq i\leq k$. Also from Corollary 2.4.9 of \cite{jk} we have that $\lambda\leq (h_{1,1}^\psi,\ldots,h_{s,s}^\psi)$. So $\lambda=(h_{1,1}^\psi,\ldots,h_{s,s}^\psi)$. From $[\mu]\subseteq [\psi]$ and $h_{1,1}^\psi-h_{1,1}^\mu=we=|\psi|-|\mu|$ it then follows that $\psi$ is obtained from $\mu$ by adding nodes only to the first row and the first column. So $\psi=(\mu_1+d,\mu_2,\ldots,\mu_r,1^{we-d})$ where $0\leq d\leq we$ and $r$ is maximal with $\mu_r>0$. As $\chi^\psi_\lambda\not=0$ for each such $\psi$ (Corollary 2.4.8 of \cite{jk}), we have that
\begin{align*}
c_\mu(\lambda)&=|\{(\mu_1+d,\mu_2,\ldots,\mu_r,1^{we-d}):0\leq d\leq we\mbox{ and}\\
&\hspace{24pt}(\mu_1+d,\mu_2,\ldots,\mu_r,1^{we-d})_{(e)}=\mu\}|.
\end{align*}
We will now check when $(\mu_1+d,\mu_2,\ldots,\mu_r,1^{we-d})_{(e)}=\mu$ holds. Let $0\leq f,g\leq q-1$ with $d\equiv f\mod e$ and $we-d\equiv g\mod e$. Then clearly
\[(\mu_1+d,\mu_2,\ldots,\mu_r,1^{we-d})_{(e)}=(\mu_1+f,\mu_2,\ldots,\mu_r,1^g)_{(e)}.\]
If $f=0$ then $g=0$ and $(\mu_1+d,\mu_2,\ldots,\mu_r,1^{we-d})_{(e)}=\mu_{(e)}=\mu$.

So assume now that $f>0$ and let $\varphi:=(\mu_1+f,\mu_2,\ldots,\mu_r,1^g)$. If no hook in the first row has length divisible by $e$ then $(\varphi_{(e)})_1=\varphi_1=\mu_1+f>\mu_1$. In particular in this case $\varphi_{(e)}\not=\mu$. Assume now that $e|h_{1,j}^\varphi$. As $h_{1,\mu_1+1}^\varphi=f<q$ we have that $j\leq\mu_1$. So
\[(\varphi_{(e)})_1=((\varphi\setminus H_{1,j}^\varphi)_{(e)})_1\leq(\varphi\setminus H_{1,j}^\varphi)_1=\left\{\begin{array}{ll}
j-1,&j>\mu_2\\
\mu_2-1,&j\leq\mu_2
\end{array}\right.<\mu_1\]
(where $\varphi\setminus H_{1,j}^\varphi$ is the partition obtained from $\varphi$ by removing the $(1,j)$-hook) and then also in this case $\varphi_{(e)}\not=\mu$.

In particular
\begin{align*}
c_\mu(\lambda)&=|\{(\mu_1+d,\mu_2,\ldots,\mu_r,1^{we-d}):0\leq d\leq we\mbox{ and}\\
&\hspace{24pt}(\mu_1+d,\mu_2,\ldots,\mu_r,1^{we-d})_{(e)}=\mu\}|\\
&=|\{(\mu_1+de,\mu_2,\ldots,\mu_r,1^{we-de}):0\leq d\leq w\}|\\
&=w+1
\end{align*}
and so the theorem holds also in this case.
\end{proof}

We will prove in Proposition \ref{t3} that, for any $e$-class regular partition $\lambda$, either $c_\mu(\lambda)=0$ or $c_\mu(\lambda)\geq w+1$, which will conclude the proof of Theorem \ref{t1}. We start with a lemma which will be used in the proof of the proposition.

\begin{lemma}\label{l1}
Assume that $\gamma_1$, $\gamma_2$, $\delta_1$ and $\delta_2$ are partitions such that $\gamma_i$ can be obtained from $\delta_j$ by removing a hook of leg length $l_{i,j}$ for $1\leq i,j\leq 2$. Further assume that $\gamma_1\not=\gamma_2$, $\delta_1\not=\delta_2$ and $\delta_1$ and $\delta_2$ cannot be obtained one from the other by removing a hook. Then $(-1)^{l_{1,1}+l_{1,2}+l_{2,1}+l_{2,2}}=-1$.
\end{lemma}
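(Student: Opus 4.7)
The natural setting for this is the beta-number (or abacus) picture. Fix once and for all an integer $N$ exceeding the number of parts of each of $\delta_1,\delta_2,\gamma_1,\gamma_2$, and identify each partition $\mu$ (padded with zeros to length $N$) with its beta-set $\beta(\mu)=\{\mu_i+N-i:1\le i\le N\}\subset\Z_{\ge 0}$. In this encoding, removing a hook of length $h$ and leg length $l$ from $\mu$ amounts to replacing some $a\in\beta(\mu)$ by $b=a-h\notin\beta(\mu)$, and $l$ equals the number of elements of $\beta(\mu)$ strictly between $b$ and $a$. The lemma thus reduces to a combinatorial identity about parity counts in four related subsets of $\Z_{\ge 0}$.

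First I would translate the four hook-removal hypotheses into $|\beta(\delta_j)\triangle\beta(\gamma_i)|=2$ for each $i,j\in\{1,2\}$. By the triangle inequality for symmetric differences, this forces $|\beta(\delta_1)\triangle\beta(\delta_2)|\le 4$; on the other hand $\delta_1\ne\delta_2$ and the assumption that neither $\delta_i$ is a hook removal of the other rule out the values $0$ and $2$, so $|\beta(\delta_1)\triangle\beta(\delta_2)|=4$. Writing $\beta(\delta_1)\setminus\beta(\delta_2)=\{p,s\}$ and $\beta(\delta_2)\setminus\beta(\delta_1)=\{q,r\}$, a short direct check shows that each $\beta(\gamma_i)$ is of the form $(\beta(\delta_1)\cap\beta(\delta_2))\cup\{x_i^c,y_i\}$, where the bead move $\beta(\delta_1)\to\beta(\gamma_i)$ removes $x_i\in\{p,s\}$ (the complement in $\{p,s\}$ of $x_i^c$) and adds $y_i\in\{q,r\}$, while $\beta(\delta_2)\to\beta(\gamma_i)$ removes the complement $y_i^c\in\{q,r\}$ and adds $x_i^c$. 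The inequalities $x_i>y_i$ and $y_i^c>x_i^c$ (for $i=1,2$) must hold, so that the moves really correspond to hook removals.

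Using $\gamma_1\ne\gamma_2$ together with these inequalities, I would enumerate the allowed configurations for $((x_1,y_1),(x_2,y_2))$; only a small number survive, each splitting further according to the linear order on $\{p,q,r,s\}$. In every such sub-case I would expand $l_{i,j}$ as the number of elements of $\beta(\delta_j)$ strictly between the endpoints of the corresponding bead move, and split this count into (a) the contribution of $\beta(\delta_1)\cap\beta(\delta_2)$ and (b) the contribution of the two elements of $\{p,q,r,s\}\cap\beta(\delta_j)$ (namely $\{p,s\}$ if $j=1$ and $\{q,r\}$ if $j=2$). For (a), each element of $\beta(\delta_1)\cap\beta(\delta_2)$ lies in exactly $0$ or $2$ of the four intervals (determined by its position relative to $\{p,q,r,s\}$), so the total $(a)$-contribution to $l_{1,1}+l_{1,2}+l_{2,1}+l_{2,2}$ is even. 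For (b), direct inspection in each sub-case shows that the total $(b)$-contribution is exactly $1$. Hence the sum of the leg lengths is odd, and the lemma follows.

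The main obstacle is this final case analysis: there are a few allowed configurations and within each further splits by the order on $\{p,q,r,s\}$, and in every case one must verify that the $(b)$-contribution equals $1$. A sign-cocycle interpretation of bead moves that bypasses the cases would be cleaner, but does not appear to be readily available.
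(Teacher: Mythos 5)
Your proposal is correct and follows essentially the same route as the paper: both pass to $\beta$-sets, deduce that $X_{\delta_1}$ and $X_{\delta_2}$ differ in exactly two beads each way, use the hook-removal inequalities to exclude the configuration in which the two moves $\delta_1\to\gamma_1$ and $\delta_1\to\gamma_2$ involve disjoint pairs of beads, and then read off each $l_{i,j}$ as the number of beads of $X_{\delta_j}$ strictly between the endpoints of the corresponding bead move. The only (cosmetic) difference is in the final parity count: the paper verifies $l_{2,1}+l_{1,2}=l_{1,1}+l_{2,2}+1$ directly by merging intervals, whereas you separate the (even) contribution of the common beads from the (odd) contribution of the four moved beads; both amount to the same interval computation.
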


\begin{proof}
Choose $\beta$-sets $X_{\gamma_i}$ and $X_{\delta_j}$ all with the same number of elements (for definition and properties of $\beta$-sets see \cite{o1}). Then $|X_{\gamma_1}\setminus X_{\delta_1}|,|X_{\gamma_1}\setminus X_{\delta_2}|=1$ by assumption that $\gamma_1$ can be obtained from $\delta_1$ and $\delta_2$ by removing a hook. So  $|X_{\delta_1}\setminus X_{\delta_2}|\leq 2$. The case $|X_{\delta_1}\setminus X_{\delta_2}|=0$ is excluded, since by assumption $\delta_1\not=\delta_2$. If $|X_{\delta_1}\setminus X_{\delta_2}|=1$, then one of $\delta_1$ and $\delta_2$ could be obtained from the other by removing a hook, which is also excluded by assumption. So $|X_{\delta_1}\setminus X_{\delta_2}|=2$ and we can write
\begin{align*}
X_{\delta_1}&=\{a_1,a_2,c_1,\ldots,c_k\},\\
X_{\delta_2}&=\{b_1,b_2,c_1,\ldots,c_k\}
\end{align*}
for some $k\geq 0$ and with $a_1,a_2,b_1,b_2,c_1,\ldots,c_k$ pairwise different. From $|X_{\gamma_i}\setminus X_{\delta_j}|=1$ for each $i,j$ and $\gamma_1\not=\gamma_2$, it follows that, up to exchanging $a_1$ and $a_2$ or $b_1$ and $b_2$,
\begin{align*}
X_{\gamma_1}&=\{a_1,b_1,c_1,\ldots,c_k\},\\
X_{\gamma_2}&=\{a_r,b_s,c_1,\ldots,c_k\}
\end{align*}
with $(r,s)\in\{(1,2),(2,1),(2,2)\}$. Notice that $a_2>b_1$ and $b_2>a_1$ as $\gamma_1$ is obtained from $\delta_j$ by removing a hook for $1\leq j\leq 2$. If $(r,s)=(2,2)$ then we would similarly obtain that $a_1>b_2$ and $b_1>a_2$, which would give a contradiction. So $(r,s)\in\{(1,2),(2,1)\}$. Up to exchanging $\delta_1$ and $\delta_2$ we can assume that
\[X_{\gamma_2}=\{a_1,b_2,c_1,\ldots,c_k\}.\]
Then $b_1>a_1$ and $a_2>b_2$. So $a_2>b_1,b_2>a_1$. Up to exchanging $\gamma_1$ and $\gamma_2$ we can assume that $a_2>b_2>b_1>a_1$. So
\begin{align*}
l_{1,1}&=|\{x\in X_{\delta_1}:b_1<x<a_2\}|=|\{c_i:b_1<c_i<a_2\}|,\\
l_{2,1}&=|\{x\in X_{\delta_1}:b_2<x<a_2\}|=|\{c_i:b_2<c_i<a_2\}|,\\
l_{1,2}&=|\{x\in X_{\delta_2}:a_1<x<b_2\}|=|\{c_i:a_1<c_i<b_2\}\cup\{b_1\}|,\\
l_{2,2}&=|\{x\in X_{\delta_2}:a_1<x<b_1\}|=|\{c_i:a_1<c_i<b_1\}|.
\end{align*}
As $a_1,a_2,b_1,b_2,c_1,\ldots,c_k$ are pairwise different we have that
\begin{align*}
l_{2,1}+l_{1,2}&=|\{c_i:b_2<c_i<a_2\}|+|\{c_i:a_1<c_i<b_2\}\cup\{b_1\}|\\
&=|\{c_i:a_1<c_i<a_2\}\cup\{b_1\}|\\
&=|\{c_i:a_1<c_i<a_2\}|+1\\
&=|\{c_i:b_1<c_i<a_2\}|+|\{c_i:a_1<c_i<b_1\}|+1\\
&=l_{1,1}+l_{2,2}+1,
\end{align*}
from which the lemma follows.
\end{proof}

\begin{prop}\label{t3}
Let $\mu\vdash n-we$ be an $e$-core with $w\geq 0$ and $\lambda\vdash n$ be $e$-class regular. Then $c_\mu(\lambda)=0$ or $c_\mu(\lambda)\geq w+1$.
\end{prop}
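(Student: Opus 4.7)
The plan is to proceed by induction on $n$ (equivalently on the $e$-weight $w$), combining the Murnaghan--Nakayama rule with Lemma~\ref{l1} to control the sign cancellations that inevitably arise. The base case $w = 0$ is immediate: $B_e(\mu,0) = \{\mu\}$, so $c_\mu(\lambda) \le 1$, and the bound is automatic whenever $c_\mu(\lambda) \ne 0$.

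For the inductive step, assume $w \ge 1$ and $c_\mu(\lambda) \ge 1$, and fix a non-vanishing $\nu_0 \in B_e(\mu,w)$. Choose a part $r$ of $\lambda$ (for instance $r = \lambda_1$) and write $\lambda' = \lambda \setminus (r)$. The Murnaghan--Nakayama rule gives
\[
\chi^{\nu}_\lambda = \sum_{\delta} (-1)^{\ell(\nu,\delta)}\,\chi^{\delta}_{\lambda'},
\]
summed over $\delta$ obtained from $\nu$ by removing an $r$-rim-hook, so some $\delta_0 = \nu_0 \setminus H_0$ satisfies $\chi^{\delta_0}_{\lambda'} \ne 0$. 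Since $e \nmid r$ (by $e$-class regularity of $\lambda$), removing $H_0$ shifts exactly two runner counts on the $e$-abacus, so the $e$-core $\mu'$ of $\delta_0$ differs from $\mu$. Let $w'$ be the $e$-weight of $\delta_0$. Since $\lambda'$ is still $e$-class regular, the inductive hypothesis applied to $(\mu',w')$ yields at least $w' + 1$ non-vanishing partitions $\delta_0,\delta_1,\dots,\delta_{w'} \in B_e(\mu',w')$.

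The heart of the argument is to lift these $\delta_i$'s back to at least $w + 1$ distinct non-vanishing partitions in $B_e(\mu,w)$ by adding $r$-hooks. On the abacus each admissible lift moves a bead from a fixed runner $\beta$ to a fixed runner $\alpha$ (determined by the pair $(\mu,\mu')$), and for each $\delta_i$ one enumerates the beads that can be shifted in this way. The two sources of loss are collisions (different $\delta_i$'s lifting to the same $\nu$) and internal cancellations in the MN expansion of a candidate $\nu$. Lemma~\ref{l1} is tailored to control the second: for any two distinct lifts $\nu_1,\nu_2$ whose MN expansions are both supported on the same pair $\{\delta_j,\delta_{j'}\}$, the sign identity $(-1)^{l_{1,j}+l_{1,j'}+l_{2,j}+l_{2,j'}} = -1$ forbids the simultaneous vanishing of $\chi^{\nu_1}_\lambda$ and $\chi^{\nu_2}_\lambda$. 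A careful abacus-based count, using the lemma to discard the pairs where such cancellation could occur, then produces the required $w + 1$ non-vanishing partitions.

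The main obstacle will be the bookkeeping in this lifting step. The difference $w - w'$ is determined by the bead counts on the runners $\alpha,\beta$ and by $r \bmod e$, and can be positive, zero, or negative; so one must treat uniformly both the case $w' < w$ (where multiple distinct lifts per $\delta_i$ are needed, requiring enough beads on the relevant runner) and the case $w' \ge w$ (where collisions and MN-cancellations must be precluded). Lemma~\ref{l1} is the combinatorial ingredient that makes the count go through uniformly.
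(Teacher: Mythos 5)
Your plan takes a genuinely different route from the paper --- induction on $|\lambda|$ by stripping a part $r$ with $e\nmid r$ and applying Murnaghan--Nakayama --- but as written it has a real gap at exactly the point you flag as ``bookkeeping'': the lifting step is not merely fiddly, it is not controlled by your inductive hypothesis. The hypothesis gives you non-vanishing partitions in the single block $B_e(\mu',w')$ of $S_{n-r}$, but for a candidate lift $\nu\in B_e(\mu,w)$ the expansion $\chi^\nu_\lambda=\sum_\delta\pm\chi^\delta_{\lambda'}$ runs over \emph{all} $r$-hook removals of $\nu$, and these can land in several different $e$-blocks of $S_{n-r}$ (on the abacus, beads of $\nu$ sitting on different runners produce different $e$-cores when moved down by $r$). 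So whether $\chi^\nu_\lambda$ vanishes depends on cancellation across blocks about which you have no inductive information, and Lemma \ref{l1} only excludes cancellation in the very special case where exactly two terms survive and they come from a fixed pair $\{\delta_j,\delta_{j'}\}$. On top of this, the count itself is missing: a given $\delta_i$ may admit no $r$-hook addition landing in $B_e(\mu,w)$ at all, distinct $\delta_i$'s may lift to the same $\nu$, and when $w'<w$ you need strictly more lifts than sources. None of these points is resolved; the proposal defers them to ``a careful abacus-based count'' that is precisely the content one would have to supply.

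The paper avoids all of this by arguing inside a single symmetric group. Fix one non-vanishing $\psi\in B_e(\mu,w)$; it has exactly $w$ hooks of length divisible by $e$, and for each such hook, with $\varphi$ the result of removing it, the virtual character $\overline{\chi}^\varphi=\sum_\beta d_{\varphi,\beta}\chi^\beta$ (sum over $ke$-hook additions to $\varphi$, signed by leg length) vanishes at $\lambda$ by Theorem 21.7 of \cite{j1}, because $\lambda$ has no part divisible by $e$. Since the $\psi$-term is non-zero, some other term $\beta(\varphi)$ in the same block must be non-zero with the opposite sign, and Lemma \ref{l1} --- applied with the two $\varphi$'s below and the pair $\{\psi,\beta\}$ above --- shows the $w$ partitions $\beta(\varphi)$ are pairwise distinct, giving $w+1$ together with $\psi$. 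If you want to salvage your induction you would need an inductive statement strong enough to control all blocks of $S_{n-r}$ simultaneously and an explicit abacus count of lifts; at that point the direct argument is both shorter and sharper.
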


\begin{proof}
We can assume that $c_\mu(\lambda)\not=0$ and let $\psi\in B_e(\mu,w)$ with $\chi^\psi_\lambda\not=0$. Also let $\varphi$ be obtained from $\psi$ by removing a $ke$-hook for some $k\geq 1$ and define
\[\overline{\chi}^\varphi:=\sum_{\beta\vdash n}d_{\varphi,\beta}\chi^\beta,\]
with $d_{\varphi,\beta}=(-1)^i$ if $\beta$ is obtained from $\varphi$ by adding a $ke$-hook of leg length $i$ and $d_{\varphi,\beta}=0$ otherwise. Notice that $d_{\varphi,\psi}\not=0$. Also if $d_{\varphi,\beta}\not=0$ then $\beta\in B_e(\mu,w)$. By Theorem 21.7 of \cite{j1}, $\overline{\chi}^\varphi_\lambda=0$ since $\lambda$ is $e$-regular and then in particular it does not have any part of length $ke$. As $d_{\varphi,\psi}\chi^\psi_\lambda\not=0$, we can find $\beta(\varphi)\in B_e(\mu,w)$ with $d_{\varphi,\beta(\varphi)}\chi^{\beta(\varphi)}_\lambda\not=0$ and such that $d_{\varphi,\psi}\chi^\psi_\lambda$ and $d_{\varphi,\beta(\varphi)}\chi^{\beta(\varphi)}_\lambda$ have different signs.

We will now show that if $\varphi_1\not=\varphi_2$ are obtained from $\psi$ by removing a $k_1e$-hook and a $k_2e$-hook respectively, then $\beta(\varphi_1)\not=\beta(\varphi_2)$. To do this assume that $d_{\varphi_1,\beta},d_{\varphi_2,\beta}\not=0$ and let $\gamma_i=\varphi_i$, $\delta_1=\psi$ and $\delta_2=\beta$ (notice that then $\delta_1$ and $\delta_2$ satisfy the assumptions of Lemma \ref{l1}, since they are distinct partitions of the same integer). In this case $d_{\varphi_i,\beta}$ and $d_{\varphi_i,\psi}$ are given by $(-1)^{l_i}$ and $(-1)^{r_i}$ respectively, where $l_i$ and $r_i$ are the leg lengths of the hooks being removed from $\beta$ and $\psi$ to obtain $\varphi_i$, so that it follows from Lemma \ref{l1} that $d_{\varphi_1,\beta}d_{\varphi_1,\psi}d_{\varphi_2,\beta}d_{\varphi_2,\psi}=-1$. So, if $\chi^\beta_\lambda\not=0$, either $d_{\varphi_1,\psi}\chi^\psi_\lambda$ and $d_{\varphi_1,\beta}\chi^{\beta}_\lambda$ or $d_{\varphi_2,\psi}\chi^\psi_\lambda$ and $d_{\varphi_2,\beta}\chi^{\beta}_\lambda$ have the same sign. In particular $\beta(\varphi_1)\not=\beta(\varphi_2)$ for $\varphi_1\not=\varphi_2$.

By definition $\psi$ has $w$ hooks of length divisible by $e$ and for each partition $\varphi$ obtained by removing one such hook from $\psi$ we can construct a partition $\beta(\varphi)\in B_e(\mu,w)$ with $\chi^{\beta(\varphi)}_\lambda\not=0$, with the property that the partitions $\beta(\varphi)$ are pairwise different. Also from their definition, the partitions $\beta(\varphi)$ are different from $\psi$. In particular
\[c_\mu(\lambda)=|\{\nu\in B_e(\mu,w):\chi^\nu_\lambda\not=0\}|\geq |\{\psi\}\cup\{\beta(\varphi)\}|=w+1\]
which finishes the proof of the theorem.
\end{proof}

Until now we have only considered $e$-class regular partitions. The two following remarks consider partitions which are not $e$-class regular and show what seems to happens in that case. Also the case $e=1$ is considered.

\begin{rem}\textnormal{
For $e\geq 2$ and for partitions $\lambda$ which are not $e$-class regular it still looks as if either $c_\mu(\lambda)=0$ or $c_\mu(\lambda)\geq w+1$. However, since we do not always have $\overline{\chi}_\lambda^\phi=0$, the proof does not hold any more.
}\end{rem}

\begin{rem}\textnormal{
For $e=1$, let $c(\lambda)=c_{()}(\lambda)$ be the number of irreducible characters of $S_n$ which do not vanish on the conjugacy class labeled by $\lambda$. It can be easily checked that $c((n-1,1))=n-1$, for $n\geq 3$ (since $\chi^\nu_{(n-1,1)}\not=0$ if and only if $\nu\in\{(n),(1^n),(a,2,1^{n-a-2}):2\leq a\leq n-2\}$). In particular since here $w=n$, the previous remark does not hold any more. Here computations seem to show that $c(\lambda)\geq n-1$ for each $\lambda\vdash n$. Since any partition of $n$ has less than $\sqrt{2n}$ different part lengths (as $1+2+\ldots+\lceil \sqrt{2n}\rceil>n$) and as $\chi^{(n)}=1$, one can shows using the same reasoning as in the proof of Proposition \ref{t3} that
\[c(\lambda)\geq  |\{(n)\}\cup\{\beta((n-a)):a\mbox{ is not a part of }\lambda\}|>n-\sqrt{2n}+1.\]
Although we cannot obtain $n-1$ as a lower bound through this argument we can still find a lower bound which is not too far from $n-1$.
}\end{rem}

\section*{Acknowledgements}

The author thanks Anton Evseev for the question which lead to starting this work and Christine Bessenrodt for help reviewing the paper.

\end{document}